\numberwithin{equation}{section}
\title{Аналитические решения уравнений свертки на выпуклых множествах со смешанной структурой. I}
\author{ С.~Н.~Мелихов, Л.~В.~Ханина}
\date{}
\newtheorem{theorem}[]{Теорема}
\newtheorem{lemma}{Лемма}[section]
\theoremstyle{definition}
\newtheorem{definition}[]{Определение}
\newtheorem{remark}[]{Замечание}
\def\RR{\mathbb R}
\def\CC{\mathbb C}
\def\NN{\mathbb N}
\begin{document}

\maketitle
\thispagestyle{empty}

\begin{abstract}
Доказан абстрактный критерий существования линейного непрерывного правого
обратного к сюръективному оператору свертки в пространствах ростков
функций, аналитических на выпуклых подмножествах комплексной плоскости со счетным базисом окрестностей
из выпуклых областей. Он сформулирован в терминах существования специальных семейств 
субгармонических функций.

\medskip

{\bf Ключевые слова: }
уравнение свертки, пространство ростков аналитических функций,
линейный непрерывный правый обратный
\end{abstract}


\section{Введение}\label{s1}

Пусть $Q$ -- выпуклое подмножество $\mathbb C$ с непустой внутренностью;
$A(Q)$ -- пространство всех функций, аналитических в некоторой окрестности $Q$.
В данной работе рассматриваются множества $Q$, обладающие счетным базисом окрестностей,
состоящим из выпуклых областей, а
$A(Q)$ наделяется естественной топологией счетного индуктивного предела пространств Фреше
функций, аналитических в базисных окрестностях (см. обзор В.П. Хавина \cite{KHAVIN}).
Класс таких множеств  $Q$ содержит все выпуклые области и выпуклые компакты (с непустой внутренностью)
в $\CC$. Ю.Ф. Коробейником \cite{KOR96} показано, что часть $\omega$ границы $\partial Q$ множества  $Q$ с
упомянутым свойством,
содержащаяся в  $Q$, компактна.
Поэтому (относительно) открытую часть $(\partial Q) \setminus \omega$ границы $\partial Q$
можно рассматривать как (возможное) препятствие для аналитического продолжения функций из $A(Q)$.
Зафиксируем выпуклый компакт $K$ в $\mathbb C$ и линейный непрерывный функционал $\mu$
на пространстве $A(K)$ всех ростков функций, аналитических на $K$.
Отметим, что множество $Q + K$ также имеет счетный базис окрестностей,
состоящий из выпуклых областей.
Функционал $\mu$ задает оператор свертки $T_\mu$, линейно и непрерывно отображающий
$A(Q + K)$ в $A(Q)$: \, $T_\mu (f)(z) := \mu_t (f(t + z))$.
Если $K=\{0\}$, то $T_\mu$ является дифференциальным оператором бесконечного
порядка с постоянными коэффициентами.
Хорошо известны критерии сюръективности оператора $T_\mu: A(Q+K)\to A(Q)$ для выпуклой области $Q$
(О.В. Епифанов \cite{EPIF74}, В.А. Ткаченко \cite{TKACH77}), выпуклого компакта $Q$ (О.В. Епифанов \cite{EPIF2}).
Для выпуклых множеств $Q$ со смешанной геометрической структурой различные условия сюръективности оператора
свертки $T_\mu: A(Q+K)\to A(Q)$ получены И.М. Мальцевым \cite{MAL1}, \cite{MAL2},
Ю.Ф. Коробейником \cite{KORSUR}, С.В. Знаменским и Е.А. Знаменской \cite{ZNAMS}.
Отметим, что для множеств $Q$, как в этой статье,
(ненулевой) оператор $T_\mu:A(Q+K)\to A(Q)$ сюръективен, например, в следующих ситуациях:
если $K$ совпадает с точкой или, более общим образом, если
преобразование Лапласа $\widehat\mu$ функционала $\mu$, являющееся целой функцией
экспоненциального типа, имеет индикатрису роста, равную
опорной функции $K$, и медленно убывает
(имеет вполне регулярный рост).

Для сюръективного оператора $T_\mu:A(Q+K)\to A(Q)$ возникает естественный вопрос о
наличии линейного непрерывного правого
обратного (далее ЛНПО) к оператору свертки $T_\mu : A(Q + K) \to A(Q)$
(его называют оператором решения для уравнения
свертки $T_\mu(f)=g$).
Идейно эта задача тесно связана с проблемой Л. Шварца наличия ЛНПО к
дифференциальному оператору конечного порядка с постоянными коэффициентами в пространствах
распределений и бесконечно дифференцируемых функций на открытых подмножествах $\RR^{N}$
(последняя была решена Р. Майзе, Б.А. Тейлором и Д. Фогтом \cite{MTV} в конце 80-х годов
прошлого века).
Для пространств аналитических функций упомянутая задача была вначале решена для
пространств целых функций, т.~е. для случая $Q = \CC$ и оператора $T_\mu : A(\CC) \to A(\CC)$.
Независимо друг от друга К.Д. Швердтфегер
\cite{SCHW}
и Б.А. Тейлор \cite{TAY82} доказали, что всякий ненулевой оператор $T_\mu : A(\CC) \to A(\CC)$
имеет ЛНПО.
Полное решение данной проблемы для выпуклых областей $Q$ было
получено З. Моммом \cite{UNIV} (для $K = \{0\}$), \cite{MOMM94}
(для произвольного выпуклого компакта $K$). В случае, когда $Q$ --
отрезок, критерий наличия ЛНПО к оператору $T_\mu$ был доказан
М. Лангенбрухом \cite{LANG}. Для произвольного выпуклого компакта
$Q$ соответствующий критерий установлен в статье \cite{MM}. Для многоугольных
множеств $Q$, не обязательно являющихся областями или компактами,
условия существования ЛНПО к оператору свертки $T_\mu$ установлены
Ю.Ф. Коробейником \cite{KORSB96}. В работе \cite{Melimo} был
рассмотрен случай выпуклого локально замкнутого (т. е. содержащего
относительно открытую часть своей границы) множества $Q$. Класс
таких множеств $Q$ тоже содержит все выпуклые области и выпуклые компакты,
но по своим геометрическим свойствам и вытекающим из них свойствам
рассматриваемых пространств двойственен к рассматриваемому в данной
статье. В статье А.В. Абанина и Ле Хай Хоя \cite{AKHOI} изучена проблема
наличия ЛНПО к оператору свертки $T_\mu$, действующему в пространствах аналитических функций
полиномиального роста.
Для множеств $Q$, как в данной работе, в \cite{BM} доказан абстрактный критерий
(в терминах существования специальных семейств
субгармонических функций) наличия ЛНПО к дифференциальному оператору бесконечного
порядка $T_\mu: A(Q)\to A(Q)$.
Отметим также исследования, относящиеся к многомерной
ситуации. Р. Майзе и Б.А. Тейлор \cite{MT1} показали, что любой
ненулевой оператор свертки, действующий в пространстве целых
функций многих комплексных переменных, имеет ЛНПО. Для пространств
функций, аналитических в выпуклых областях, на выпуклых компактах и на
выпуклых локально замкнутых множествах в $\CC^N$ и для операторов
свертки в них соответствующие результаты были получены,
соответственно, в \cite{ACTA}, \cite{MM95}, \cite{Melimo}.

Начиная с работ К.Д. Швердтфегера \cite{SCHW}, Б.А. Тейлора
\cite{TAY82}, Р. Майзе, З. Момма, Б.А. Тейлора \cite{MMT}
обозначилось два подхода
к решению рассматриваемой проблемы и ей подобных. Один,
восходящий к \cite{SCHW}, использует структурную
теорию пространств Фреше (различные инварианты, критерии
расщепляемости коротких точных последовательностей пространств
Фреше и, в частности, пространств степенных рядов). Второй, идущий от
\cite{TAY82}, \cite{MMT} и существенно развитый М. Лангенбрухом и З. Моммом
\cite{LANGMOMM}, связан
с применением результатов о наличии ЛНПО к
$\overline\partial$-оператору, существованием специальных семейств
(плюри)субгармони\-чес\-ких функций.
Эти подходы (применяемые зачастую одновременно)
используют
следующий общий метод. Задача о существовании ЛНПО к $T_\mu: A(Q+K)\to A(Q)$ сводится к
проблеме дополняемости подмодуля $\widehat{\mu} \cdot \mathcal A_Q$,
порожденного символом оператора свертки $T_\mu$ -- преобразованием
Лапласа $\widehat \mu$ функционала $\mu$, в пространстве $\mathcal A_{Q + K}$.
При этом $\mathcal A_Q$ (соотв. $\mathcal A_{Q+K}$) -- весовое пространство целых функций,
изоморфное посредством преобразования Лапласа сильному
сопряженному к $A(Q)$ (соотв. $A(Q+K)$). Дополняемость
подмодуля $\widehat{\mu} \cdot \mathcal A_Q$ равносильна существованию
ЛНПО к фактор-отображению $\mathcal A_{Q + K} \to \mathcal A_{Q + K}\left/\right.
(\widehat{\mu}\cdot \mathcal A_Q)$. Для решения проблемы наличия ЛНПО к
этому фактор-отображению нужна
подходящая реализация факторпространства $\mathcal A_{Q + K}
\left/\right.(\widehat{\mu\cdot} \mathcal A_Q)$ в виде весового пространства
векторнозначных последовательностей, а по сути, необходимы
"хорошие"  \, оценки снизу для символа $\widehat\mu$. Отметим, что
первый подход применялся  для пространств (исходных или связанных
с ними), имеющих несмешанную структуру: к пространствм Фреше или
счетным индуктивным пределам банаховых пространств.

В настоящей работе мы используем второй из упомянутых выше подходов. При этом
возникают трудности, связанные как со смешанной геометрической
структурой множества $Q$ и, как следствие, со смешанной
топологической структурой возникающих пространств, так
и с тем, что $Q$ может быть неограниченным.
Первая из них
частично преодолена с помощью гомологического метода,
предложенного В.П. Паламодовым \cite{PAL68}, \cite{PAL71}, и равенства ${\rm Proj}^1\mathcal X=0$,
подробно исследованного Д. Фогтом \cite{VOGT} для конкретных проективных спектров
$\mathcal X$ счетных
индуктивных пределов банаховых пространств.
Существенную роль в их преодолении играет описание
весового пространства $\mathcal A_Q$ целых функций, изоморфного сильному
сопряженному к $A(Q)$, как индуктивного предела весовых пространств Фреше $\mathcal A_M$,
когда $M$ пробегает семейство всех выпуклых компактных подмножеств $Q$.
Получить его позволил результат А.~Мартино
\cite{Martino} о совпадении естественных  индуктивной и проективной
топологий в пространстве ростков всех
функций, аналитических на подмножестве $\CC$.
Важную роль
играет доказанное здесь необходимое условие сюръективности $T_\mu$,
выраженное в привычных терминах медленного убывания (полной
регулярности роста) функции $\widehat\mu$ в направлениях ограниченности и негармоничности
опорной функции множества $Q$. Медленное убывание $\widehat\mu$ позволяет реализовать
описанный выше подход, получив удобное описание факторпространства
$\mathcal A_{Q + K}\left/\right.(\widehat{\mu}\cdot \mathcal A_Q)$, а
критерии дополняемости $\widehat{\mu\cdot} \mathcal A_Q$
сформулировать в терминах существования
специальных семейств субгармонических функций с равномерными
оценками сверху и локальными оценками снизу. Отметим, что соответствующее
условие $SH(A_{\widehat\mu},Q)$ существования таких семейств -- опосредованное
условие непрерывности
линейного непрерывного правого обратного к оператору, реализующему
фактор-отображение $\mathcal A_{Q + K} \to \mathcal A_{Q + K}\left/\right.
(\widehat{\mu}\cdot \mathcal A_Q)$.)

В данной статье (в \S~3) доказан критерий существования линейного непрерывного
правого обратного к оператору свертки $T_\mu: A(Q+K)\to A(Q)$ 
в терминах условия $SH(A_{\widehat{\mu}}, Q)$. (Такой критерий мы называем абстрактным.) 
В \S\,2 приводятся сведения о
множествах $Q$ рассматриваемого типа, соответствующих
пространствах и доказываются их новые свойства.

Наличие упомянутых семейств субгармонических
функций, как и ранее для открытых и компактных выпуклых множеств $Q$,
удается выразить посредством граничного поведения
выпуклых конформных отображений в направлениях сгущения нулей $\widehat\mu$.
Указанные отображения определяются внутренностью $Q$
и замыканием $Q$, если $Q$ ограниченно, и
ограниченной части $Q$, содержащей $\omega$, если $Q$ неограниченно.
Этому будет посвящена вторая часть данного исследования, составляющая предмет следующей публикации.


\section{Пространства ростков аналитических фун\-кций. Их сопряженные. Оператор свертки}\label{s2}

\subsection{Пространства ростков аналитических функций}\label{s2.1}

Далее будут рассматриваться выпуклые множества
$Q$ в $\mathbb C$,
обладающие счетным базисом окрестностей,
состоящим из выпуклых областей.
Для множества $M\subset\mathbb C$ символы
${\rm int}\,M$, $\overline M$, $\partial M$, ${\rm conv}\,M$
обозначают, соответственно, внутренность, замыкание, границу, выпуклую оболочку $M$ в
$\mathbb C$.

Приведем характеризацию множеств $Q$ рассматриваемого типа.
Полагаем $\omega:=Q\cap\partial Q$.

\begin{lemma} \label{Q0lemma} (I)
Пусть $Q$ -- выпуклое подмножество $\CC$, отличное от $\CC$,
с непустой внутренностью; $\omega\ne\emptyset$;
$Q_0 = ({\rm int}\,Q) \cup ((\partial Q)\setminus \omega)$.
Следующие утверждения равносильны:

\noindent
(i) $Q$ имеет счетный базис окрестностей, состоящий из выпуклых областей.

\noindent
(ii) Множество $\omega$ компактно, и пересечение $Q_0$
с любой прямой, опорной к $\overline Q$, замкнуто.

\noindent
(iii) Множество $\omega$ компактно
и любая прямая, опорная к $\overline Q$, не может пересекать
одновременно и $\omega$, и $(\partial Q)\setminus \omega$.

\noindent
(II) Пусть $Q$ -- выпуклое подмножество $\CC$ с пустой внутренностью.
$Q$ обладает счетным базисом окрестностей, состоящим
из выпуклых областей, тогда и только тогда,
когда $Q$ является отрезком.
\end{lemma}

\begin{proof}
Утверждение (II) следует из \cite[теорема]{KOR96}.
Равносильность (i) и (ii)
доказаны в \cite[лемма 1.1]{BM}.
При этом в \cite{BM} утверждение (ii) леммы 1.1 сформулировано неточно.
Именно, его следует сформулировать так,
как (ii) в лемме \ref{Q0lemma} здесь.
Равносильность (ii) и (iii) очевидна.
\end{proof}

В дальнейшем мы предполагаем, что $Q$ -- выпуклое
подмножество $\CC$ c непустой внутренностью, обладающее счетным базисом
окрестностей, состоящим из выпуклых областей, и
$\omega=Q\cap\partial Q\ne\emptyset$.

Положим
$$
B(t, r):=\{z\in\CC\,|\,|z-t|<r\}, \,\,
\overline B(t, r):=\{z\in\CC\,|\,|z-t|\le r\},\,\, t\in\CC, \, r>0.
$$

\begin{remark}
Базис окрестностей $Q$ образуют открытые множества
$\widetilde{Q}_n:=({\rm int}\,Q)\cup\left(\omega+B\left(0,\frac{1}{n}\right)\right)$,
\, $n\in\mathbb{N}$,
и выпуклые области
$Q_n := {\rm conv}\, \widetilde Q_n$, $n\in\mathbb{N}$.
\end{remark}

\medskip
Пусть $A(Q)$ -- пространство ростков всех функций, аналитических на $Q$,
т.~е. аналитических в некоторой открытой окрестности $Q$.
Для открытого множества $M$ в $\CC$ символ $A(M)$ обозначает
пространство всех аналитических в $M$ функций с топологией
равномерной сходимости на компактных подмножествах $M$.
С этой топологией $A(M)$ является пространством Фреше.
Поскольку $A(Q) = \bigcup\limits_{n\in\mathbb{N}} A(Q_n)$,
то введем в $A(Q)$ топологию индуктивного предела пространств $A(Q_n)$
(относительно естественных отображений вложения $A(Q_n)$ в $A(Q)$): \,
$A(Q):={\rm ind}_{n\to} A(Q_n)$.


\medskip
Для множества $M\subset\CC$ символом $H_M$
обозначим опорную функцию $M$:\,
$H_M (z) := \sup\limits_{t\in M} {\rm Re}(zt),\, z\in \CC$.
Положим $S:= \{z\in\CC | \, |z| = 1\}$.

\begin{lemma} \label{convexSetBasis} Для любого выпуклого компакта $K\subset\CC$
выпуклое множество $Q + K$ обладает счетным базисом окрестностей, состоящим из
выпуклых областей.
Таким базисом является последовательность
$(Q_n + K)_{n\in\mathbb N}$, где $Q_n$ --
множества, как в замечании 1.
\end{lemma}

\begin{proof}
Отметим, что ${\rm int}(Q + K)\neq\varnothing$
и $\omega_1:=(Q + K)\cap \partial (Q + K)$ непусто.
Покажем, что множество $\omega_1$ компактно.
Пусть $z_n\in{\omega}_1$, $n\in\NN$.
Существуют $a_n \in S$ такие, что
  ${\rm Re}(z_n a_n) = H_{Q + K} (a_n) = H_Q (a_n) + H_K (a_n)$.
  Возьмем $x_n\in Q$ и $y_n \in K$, для которых $z_n = x_n + y_n$,
  $n\in\NN$.
  Поскольку
  $$
  {\rm Re}(x_n a_n)+{\rm Re}(y_n a_n)={\rm Re}(z_na_n)=
  H_Q (a_n) + H_K (a_n),
  \,\, n\in\NN,
  $$
  то
  ${\rm Re}(x_n a_n) = H_Q (a_n)$ и
  ${\rm Re}(y_n a_n) = H_K (a_n)$, \, $n\in\NN$.
  Следовательно, $x_n\in Q\cap \partial Q = \omega$.
  Поскольку $\omega$, $K$ и $S$ компактны, то найдутся
  подпоследовательности
  $(x_{n_k})_{k\in\NN}$, $(y_{n_k})_{k\in\NN}$, $(a_{n_k})_{k\in\NN}$,
  сходящиеся (в $\mathbb{C}$) соответственно к
  $x\in \omega\subset Q$, $y\in K$ и $a\in S$.

  Если $z := x + y$, то $z = \lim\limits_{k\to\infty} z_{n_k} \in Q + K$.
  Кроме того, $z\in \partial (Q + K)$. Действительно, если $z\in{\rm int}(Q+K)$, то
  найдется $\varepsilon>0$ такое, что ${\rm Re}(zb)+\varepsilon\le H_{Q+K}(b)=
  H_Q(b)+H_K(b)$ для любого $b\in S$. Поскольку ${\rm Re}(z_{n_k}a_{n_k})\to
  {\rm Re}(za)$, то это противоречит равенству
  ${\rm Re}(z_{n_k} a_{n_k}) = H_Q (a_{n_k}) + H_K (a_{n_k})$
  для больших $k$.
  Таким образом, множество $\omega_1$ компактно.

  Покажем далее, что любая опорная к
  $\overline{Q + K} = \overline{Q} + K$
  прямая не может одновременно пересекать множество
  $(\partial (Q + K))\setminus \omega_1$ и $\omega_1$.
  Предположим противное, т.~е. что найдутся
  $z\in \omega_1$ и $v\in(\partial (Q + K))\setminus \omega_1$,
  лежащие на одной опорной прямой
  $
  P:= \{t\in\mathbb C \,|\, {\rm Re} (ta) = H_{Q + K} (a)\}
  $
  для некоторого $a\in S$.
  Так как $z\in Q+K$, то существуют $x\in Q$
  и $y\in K$ такие, что $z=x+y$. При этом $x\in Q\cap\partial Q=
  \omega$.
  Поскольку $\partial(Q+K)\subseteq\partial Q + K$,
  то найдутся $u\in\partial Q$, $w\in K$ такие, что
  $v=u+w$.
  При этом отрезок $[x, u]$ лежит на опорной прямой
  $
  P_0 := \{t\in \CC \, |\, {\rm Re}(ta) = H_Q (a) \}
  $
  к $\overline{Q}$.
  Так как $P_0$, по лемме 2.1~(iii),
  не может одновременно пересекать
  $\omega$ и $(\partial Q)\backslash\omega$,
  то $u\in \omega\subseteq Q$.
  Значит, $v = u + w \in (Q + K)\cap \partial (Q + K) = \omega_1$.
  Получено противоречие.
  Следовательно, по лемме 2.1~(iii)
  множество $Q + K$ имеет счетный базис окрестностей, состоящий
  из выпуклых областей.

  Отметим, что  области
  $$
  \widetilde Q_n + K =
  ({\rm int}\,Q)\cup (\omega + B(0, 1/n)) + K =
  $$
  $$
  (({\rm int}\,Q) + K)\cup (\omega + K + B(0, 1/n)), \,
  n\in\NN,
  $$
  образуют базис окрестностей $Q + K$.
  Действительно, пусть $G$ --
  открытая окрестность $Q + K$.
  Поскольку $\omega + K$ -- компакт и
  $\omega + K \subset G$, то
  существует $n\in\NN$ такое, что
  $\omega + K  + B(0, 1/n)\subset G$.
  Кроме того, $({\rm int}\,Q ) + K \subset G$. Поэтому $\widetilde Q_n+K\subset G$.
  Поскольку $Q$ обладает базисом окрестностей,
  состоящим из выпуклых областей,
  то $(Q_n+K)_{n\in\NN}$ является базисом окрестностей $Q+K$.
\end{proof}

\medskip
Из леммы \ref{convexSetBasis} следует, что
$A(Q + K) = \bigcup\limits_{n\in\mathbb N} A(Q_n + K)$.
Введем в $A(Q + K)$ топологию индуктивного предела пространств
Фреше $A(Q_n+K)$
относительно естественных отбражений вложения $A(Q_n+K)$ в $A(Q+K)$:
$A(Q + K):= {\rm ind}_{n\to} A(Q_n + K)$.

\subsection{Сопряженное к $A(Q)$}

Пусть $(G_m)_{m\in\mathbb N}$ -- фундаментальная
последовательность компактных подмножеств ${\rm int}\,Q$. Без
ограничения общности можно считать, что все компакты $G_m$
выпуклые и $G_m \subset {\rm int }\, G_{m + 1}$ для любого $m\in\mathbb N$.

Положим
$$
G_{mn}:= {\rm conv}\left(G_m \cup \left(\omega + \overline B \left(0,
\frac{m}{n(1+m)}\right)\right)\right), m\in\NN, n\in\NN.
$$
Тогда $(G_{mn})_{m\in\NN}$
-- фундаментальная система компактных подмножеств
$Q_n$.

Положим $H_{nm} := H_{G_{nm}}, \, n,m \in \mathbb N$.
Тогда
$$
H_{nm}(z) = \max\left(H_{G_m}(z); H_{\omega}(z) + \frac{m}{n(1+m)}|z|\right),
\,z\in \mathbb C, \, n,m\in \NN.
$$

Преобразование Лапласа устанавливает линейный топологический изоморфизм
между сильным сопряженным к
$A(Q)$ и весовым пространством целых функций.
Введем его. Для $n,m \in \mathbb N$ определим весовое банахово пространство
целых функций
$$
\mathcal A_{Q, n, m} := \left\{f\in A(\mathbb C)\left|\right.\,\,
\|f\|_{n,m}:= \sup_{z\in\mathbb C} \frac{|f(z)|}{\exp(H_{nm} (z))} <+ \infty
\right\}.
$$
Ясно, что $\mathcal A_{Q,n+1,m}\subset \mathcal A_{Q,n,m}\subset \mathcal A_{Q,n,m+1}$
для любых $n, m\in\mathbb N$,
и эти вложения непрерывны.
Положим
$$
\mathcal A_{Q,n} := \bigcup\limits_{m\in\mathbb N}\mathcal A_{Q, n, m};  \,
\mathcal A_Q :=\bigcap\limits_{m\in\mathbb N}\mathcal A_{Q,n}
$$
и введем в $\mathcal A_{Q,n}$ топологию индуктивного предела пространств
$\mathcal A_{Q,n,m}$ относительно их вложений в $\mathcal A_{Q,n}$, а в
$A_Q$ -- топологию проективного предела пространств
$\mathcal A_{Q,n}$ относительно вложений $\mathcal A_Q$ в $\mathcal A_{Q,n}$:
$$
\mathcal A_{Q,n} := {\rm ind}_{m\to} \mathcal A_{Q, n, m};
\,\,
\mathcal A_Q := {\rm proj}_{\leftarrow n}{\mathcal A_{Q,n}}.
$$

Для локально выпуклого пространства $E$
символ $E'$ обозначает топологическое сопряженное к $E$
пространство. Положим $e_\lambda(z):=\exp(\lambda z)$, \,
$\lambda, z\in\CC$.

Следующая лемма доказана в \cite[лемма 2.1]{BM}.

\begin{lemma}\label{isomorfismLemma}
$(i)$ Преобразование Лапласа
$
{\cal F}(\varphi)(\lambda):=\varphi(e_{\lambda}), \,\lambda\in\mathbb C,
\,\varphi\in A(Q)',
$
является топологическим изоморфизмом сильного сопряженного
$A(Q)'_b$ к $A(Q)$
на $\mathcal A_Q$.

\noindent
$(ii)$ Указанный в (i) изоморфизм задает двойственность
между $A(Q)$ и $\mathcal A_Q$
посредством билинейной формы
$\langle g, f\rangle := {\cal F}^{-1} (f)(g)$, \,
$g\in A(Q)$,
\, $f\in \mathcal A_Q$.


\noindent
(iii) Пространство $A(Q)$
монтелевское, а значит, и рефлексивное.
\end{lemma}

\medskip
Пусть $\mathcal A_{Q, {\rm sp}}$ -- проективный спектр пространств $\mathcal A_{Q,n}$
и отображений вложения $i_{n+1}^n: \mathcal A_{Q,n+1}\to \mathcal A_{Q,n}$
(см. \cite{VOGT}).

\begin{remark}
Пространство $\mathcal A_Q$ и $\mathcal A_{Q, {\rm sp}}$ обладают следующими свойствами.

\noindent
$(i)$ $\mathcal A_Q$ полно.

\noindent $(ii)$ ${\rm Proj}^1\mathcal A_{Q, {\rm sp}}=0$.

\noindent
$(iii)$
$\mathcal A_Q$ ультраборнологично, т.\,е.
является индуктивным пределом некоторого семейства банаховых
пространств.

\noindent
$(iv)$ Всякое ограниченное подмножество $\mathcal A_Q$
равностепенно непрерывно (если $\mathcal A_Q$ отождествить с топологическим
сопряженным к $A(Q)$, как в лемме \ref{isomorfismLemma}).
\end{remark}

\begin{proof}
Утверждение (i) доказано в \cite[лемма 2.1]{BM}. По поводу доказательства
(ii) см. \cite[доказательство леммы 2.3]{BM}.

(iii): Заметим вначале, что каждое пространство
$\mathcal A_{Q_n}$, $n\in\NN$, плотно в $\mathcal A_Q$. Действительно, зафиксируем
точку $\lambda\in Q$.  Вследствие \cite[теорема 2]{POLYNOM} множество
$e_\lambda\CC[z]=\{e_\lambda f\,|\, f\in\CC[z]\}$, содержащееся в $\mathcal A_Q$,
плотно в каждом пространстве $\mathcal A_{Q,n}$. При этом
$\CC[z]$ -- множество всех многочленов над полем $\CC$. Равенство
${\rm Proj}^1\mathcal A_{Q, {\rm sp}}=0$ влечет борнологичность, а значит, и ультраборнологичность
$\mathcal A_Q$ \cite[теорема 3.4]{VOGT}.

Так как  $A(Q)$ -- бочечное локально выпуклое пространство, то (iv) вытекает из
леммы 2.3~(i) и критерия бочечности \cite[гл.~IV, 5.2]{SHEF}.
\end{proof}

\medskip
Лемма 2.3 (с учетом леммы 2.2)
дает описание и сопряженного к пространству $A(Q+K)$ для
любого выпуклого компакта $K$ в $\CC$. При этом следует
заметить, что $(G_m+K)_{m\in\NN}$ является фундаментальной
последовательностью
компактных подмножеств ${\rm int}(Q+K)$.


\medskip
Если $(H, F)$ -- дуальная пара (комплексных) векторных пространств относительно билинейной формы
$\langle \cdot,\cdot\rangle$, то для множества $L\subset H$ символ
$L^0$ обозначает поляру $L$ в $F$, т.~е.
$$
L^0:=\{f\in F\,|\, |\langle x,f\rangle|\le 1 \,\mbox{ для любого } \,
x\in L\}.
$$
Для векторного пространства $H$,
абсолютно выпуклого поглощающего множества $U\subset H$
через $p_U$ обозначим функционал Минковского (калибровочную функцию)
множества $U$.

\medskip
Символ $CC(Q)$ обозначает семейство всех выпуклых компактов $M\subset Q$.
Докажем теперь результат об индуктивном описании пространства $A_Q$,
играющий существенную роль при доказательстве необходимого условия
сюръективности $T_\mu$.

\begin{lemma}
\begin{itemize}
\item[(i)] Для любого ограниченного множества $B$ в $\mathcal A_Q$
существует $M\in CC(Q)$
такое, что $B$ содержится и ограниченно в $\mathcal A_M$.
\item[(ii)]
Выполняется алгебраическое и топологическое равенство
$$
\mathcal A_Q = {\rm ind}_{M\in CC(Q)} \mathcal A_M,
$$
т.~е. $A_Q=\bigcup\limits_{M\in CC(Q)} \mathcal A_M$
и $\mathcal A_Q$ является индуктивным пределом пространств Фреше
$\mathcal A_M$, $M\in CC(Q)$, относительно отображений вложения
$\mathcal A_M$ в $\mathcal A_Q$.
\end{itemize}
\end{lemma}

\begin{proof}
(i): Будем рассматривать дуальную пару $(A(Q), \mathcal A_Q)$
относительно билинейной формы $\langle\cdot,\cdot\rangle$,
как в лемме 2.3~(ii). Пусть $B$ -- ограниченное подмножество $\mathcal A_Q$. По замечанию 2
$B$ равностепенно непрерывно в $A_Q$.
Значит, существует замкнутая абсолютно выпуклая окрестность
нуля $U$ в $A(Q)$ такая, что
$B\subset U^0$.
По \cite[утверждение 1.9]{Martino}
индуктивная топология, которую мы ввели в $A(Q)$,
совпадает с другой естественной топологией -- проективной
топологией $pr$ в
$(A(Q), pr) := {\rm proj}_{M\in CC(Q)} A(M)$.
Поэтому
существуют $M\in CC(Q)$, абсолютно выпуклая окрестность нуля $V$ в
$A(M)$ такие, что
$V\cap A(Q)\subset U$. Значит, на $A(Q)$ выполняется неравенство
$p_U\le p_{V\cap A(Q)}$.
Поскольку $|\langle g,f\rangle|\le p_U(g)$
для любых $g\in A(Q)$, $f\in U^0$, то
$$
|\langle g,f\rangle|\le p_{V\cap A(Q)}(g)=p_V(g), \,\, g\in A(Q),
f\in U^0.
$$
Если взять $g:=e_\lambda$, то из последнего неравенства получим, что
для любого $f\in U^0$
\begin{equation}
|f(\lambda)|\le p_V(e_\lambda),\, \lambda\in\mathbb C.
\end{equation}
Пусть $A^\infty(M+B(0,1/n))$, $n\in\mathbb N$, -- банахово пространство
всех функций, аналитических и ограниченных в $M+B(0,1/n)$
с обычной $\sup$-нормой. Тогда $A(M)={\rm ind}_{n\to}A^\infty(M+B(0,1/n))$.
Поэтому для любого $n\in\mathbb N$ найдется постоянная $C_n$,
для которой выполняется неравенство
$p_V(g)\le C_n\sup\limits_{z\in M+B(0,1/n)}|g(z)|$,
\, $g\in A^\infty(M+B(0,1/n))$.
Значит, для любого $\lambda\in\mathbb C$
\begin{equation}
p_V(e_\lambda)\le C_n\exp(H_M(\lambda)+|\lambda|/n).
\end{equation}
Из (2.1), (2.2) следует, что для любого $n\in\mathbb N$
$$
\sup\limits_{f\in B}\sup\limits_{\lambda\in\mathbb C}
\frac{|f(\lambda)|}{\exp(H_M(\lambda)+|\lambda|/n)}<+\infty.
$$
Следовательно, множество $B$ содержится и ограниченно в $\mathcal A_M$.

(ii): Так как  любой компакт $M\in CC(Q)$ содержится
в каждой области $Q_n$, $n\in\mathbb N$,
то $\mathcal A_M$ непрерывно вложено
в $\mathcal A_Q$ для любого $M\in CC(Q)$. Из утверждения (i)
вытекает, что, наоборот, $\mathcal A_Q$ вложено в
$\bigcup\limits_{M\in CC(Q)} \mathcal A_M$, а отображение вложения
$\mathcal A_Q$ в ${\rm ind}_{M\in CC(Q)} \mathcal A_M$ переводит
всякое ограниченное в $\mathcal A_Q$ множество
в ограниченное в ${\rm ind}_{M\in CC(Q)} \mathcal A_M$
множество. Так как $\mathcal A_Q$ борнологично (замечание 2), то
это вложение непрерывно \cite[гл.~2, утверждение 8.3]{SHEF}. Значит, пространства
$\mathcal A_Q$ и ${\rm ind}_{M\in CC(Q)} \mathcal A_M$ совпадают алгебраически
и топологически.

\end{proof}

\subsection{Операторы свертки}

Пусть $K$ -- выпуклый компакт в $\CC$ и
$\mu\in A(K)'$.
Согласно \cite{KOR} оператор свертки
$$
T_{\mu} (f)(z) = {\mu}_t (f(t + z))
$$
линейно и непрерывно отображает каждое пространство
$A(Q_n + K), \, n\in\mathbb N$, в $A(Q_n)$.
Значит, $T_{\mu}$ -- линейное непрерывное отображение $A(Q + K)$
в $A(Q)$.


\medskip
Пусть $\widehat\mu=\mathcal F(\mu)$ --
преобразование Лапласа $\mu$.

\begin{remark}
Если $A(Q + K)'$ и $A(Q)'$ отождествить с
$\mathcal A_{Q + K}$ и $\mathcal A_Q$ соответственно (см. лемму 2.3), то
сопряженным к оператору свертки
$T_{\mu} : A(Q + K)\to A(Q)$
является оператор умножения
$M_{\widehat \mu} : \mathcal A_Q \to \mathcal A_{Q + K}$,
$f \mapsto \widehat{\mu} f$.
\end{remark}

Приведем общий результат о существовании ЛНПО к оператору $T_{\mu}$
(см. \cite[лемма 4.1]{BM}; при доказательстве
импликации $(iii)\Rightarrow (ii)$ существенно используется ультраборнологичность
пространства $\mathcal A_{Q+K}$, имеющая место по замечанию 2).

\begin{lemma}
Следующие утверждения равносильны:

\noindent
$(i)$ $T_{\mu} : A(Q + K)\to A(Q)$ имеет ЛНПО.

\noindent
$(ii)$ Оператор $M_{\hat \mu}: \mathcal A_Q\to \mathcal A_{Q + K}, \,
f\mapsto \widehat\mu f$,
имеет линейный непрерывный левый обратный.

\noindent $(iii)$ Подпространство $\hat\mu \cdot \mathcal A_Q$ замкнуто в $\mathcal A_{Q+K}$ и
фактор-отображение $q:\mathcal A_{Q + K} \to \mathcal A_{Q + K} / (\widehat\mu \cdot
\mathcal A_Q)$ имеет ЛНПО.
\end{lemma}


\section{Условия существования линейного непрерывного
правого обратного к оператору све\-ртки}


\subsection{Условия сюръективности оператора свертки}

Далее $K$ -- выпуклое компактное подмножество $\CC$, $\mu\in A(K)'\backslash\{0\}$.

Покажем, что для
сюръективного оператора $T_\mu$, как и в несмешанных случаях
(когда $Q$ открыто или компактно),
справедлив соответствующий аналог теоремы деления.

\medskip
\begin{lemma}\label{muLemma1}
Пусть оператор $T_{\mu}: A(Q + K) \to A(Q)$ сюрьективен. Тогда
$$
\widehat{\mu}\cdot  \mathcal A_Q =(\widehat{\mu}\cdot A(\CC))\cap \mathcal A_{Q+K}.
$$
\end{lemma}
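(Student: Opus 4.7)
The inclusion $\widehat\mu\cdot\mathcal A_Q\subseteq(\widehat\mu\cdot A(\CC))\cap\mathcal A_{Q+K}$ is trivial: the multiplication map $M_{\widehat\mu}\colon\mathcal A_Q\to\mathcal A_{Q+K}$ recorded in the remark preceding the surjectivity criterion for $T_\mu$ is well defined, and $\mathcal A_Q\subset A(\CC)$ obviously.

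For the opposite inclusion I plan to exploit the Fourier--Laplace duality of Lemma~\ref{isomorfismLemma}. Fix $g\in\mathcal A_{Q+K}$ of the form $g=\widehat\mu\cdot h$ with $h\in A(\CC)$, and put $\Phi:=\mathcal F^{-1}(g)\in A(Q+K)'$, so that $\widehat\Phi=g$. A direct computation on exponentials gives $T_\mu(e_\lambda)=\widehat\mu(\lambda)\,e_\lambda$, which in turn yields $\widehat{T_\mu'\psi}=\widehat\mu\cdot\widehat\psi$ for every $\psi\in A(Q)'$. Hence, if I can produce $\psi\in A(Q)'$ with $T_\mu'\psi=\Phi$, then $\widehat\mu\cdot\widehat\psi=\widehat\mu\cdot h$ identically on $\CC$, whence $\widehat\psi=h$ (as $\widehat\mu\not\equiv 0$), which by Lemma~\ref{isomorfismLemma}\,(i) delivers $h\in\mathcal A_Q$, as required.

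The surjectivity of $T_\mu$ enters through the standard closed-range identity $\mathrm{Im}\,T_\mu'=(\ker T_\mu)^\perp$, so the construction of $\psi$ reduces to verifying that $\Phi$ annihilates $\ker T_\mu$. On the elementary exponential-polynomial solutions $z^j e_{\lambda_0}$ lying in $\ker T_\mu$ (precisely those $z^j e_{\lambda_0}$ for which $\widehat\mu^{(k)}(\lambda_0)=0$ for $0\le k\le j$) the vanishing is automatic by Leibniz:
$$
\Phi(z^j e_{\lambda_0})=\partial_\lambda^j\widehat\Phi(\lambda_0)=\partial_\lambda^j(\widehat\mu\cdot h)(\lambda_0)=\sum_{k=0}^j\binom{j}{k}\widehat\mu^{(k)}(\lambda_0)\,h^{(j-k)}(\lambda_0)=0.
$$

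The essential obstacle is propagating this from the elementary solutions to an arbitrary $F\in\ker T_\mu$: this is a spectral synthesis statement for the surjective convolution operator $T_\mu$ on the convex-set space $A(Q+K)$, asserting that the closed linear span of the exponential-polynomial solutions exhausts $\ker T_\mu$. For convex $Q$ this is the natural analogue of Schwartz's classical synthesis theorem on $A(\CC)$, and I would invoke it from one of the sources referenced in \S\,1 (e.g.\ \cite{KORSUR}, \cite{EPIF74}, or via the slowly-decreasing-functions machinery of \cite{TAY82}) in the form suited to our $A(Q+K)$. Granted synthesis, continuity of $\Phi$ propagates the vanishing, so $\Phi\in(\ker T_\mu)^\perp=\mathrm{Im}\,T_\mu'$, which yields the required $\psi$. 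Equivalently---and this is the route I would pursue if a ready-made synthesis theorem were not directly applicable---one shows that $(\widehat\mu\cdot A(\CC))\cap\mathcal A_{Q+K}$ is closed in $\mathcal A_{Q+K}$ by a Hurwitz-type argument and that $\widehat\mu\cdot\mathcal A_Q$ (closed by the surjectivity criterion for $T_\mu$) is dense in it; by the Fourier pairing this density is dual to precisely the same synthesis statement.
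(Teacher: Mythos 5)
Your overall architecture is sound, and your closing sentence (``closedness of $\widehat\mu\cdot\mathcal A_Q$ plus density'') is in fact exactly the route the paper takes; the trivial inclusion, the identification of $M_{\widehat\mu}$ with $T_\mu'$, and the use of surjectivity through a closed-range/duality theorem (the paper invokes \cite[������� 6.7.1]{Edwards} to conclude that $\operatorname{Im}M_{\widehat\mu}=\widehat\mu\cdot\mathcal A_Q$ is closed in $\mathcal A_{Q+K}$) all match. The genuine gap is that both of your routes bottom out at the same unproven claim --- spectral synthesis for $\ker T_\mu$, equivalently density of $\widehat\mu\cdot\mathcal A_Q$ in $(\widehat\mu\cdot A(\CC))\cap\mathcal A_{Q+K}$ --- and you leave it as an appeal to \cite{KORSUR}, \cite{EPIF74} or \cite{TAY82}, none of which contains it in the form needed here. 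This claim is the entire content of the lemma in dual disguise, and it is not off-the-shelf for the sets considered in this paper: $Q+K$ is unbounded, neither open nor closed, so a synthesis theorem for $A(\Omega)$ on a convex domain $\Omega$ does not apply directly, and your primary route would additionally need to justify the closed-range identity $\operatorname{Im}T_\mu'=(\ker T_\mu)^{\perp}$ in this non-Banach (reflexive (LB)/projective-limit) setting.

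What actually closes the gap in the paper is a localization to compact convex sets combined with a polynomial-approximation theorem of Krasichkov-Ternovskii: by Lemma 2.4 one has $\mathcal A_{Q+K}=\operatorname{ind}_{M\in CC_0(Q+K)}\mathcal A_M$ over compact convex $M\ni 0$, and \cite[������� 4.4]{KRTER} gives that $\widehat\mu\cdot\CC[z]$ is dense in $(\widehat\mu\cdot A(\CC))\cap\mathcal A_M$ for each such $M$; density in every step of the inductive limit gives density in $\mathcal A_{Q+K}$, and since $\widehat\mu\cdot\CC[z]\subset\widehat\mu\cdot\mathcal A_Q$ with the latter closed, equality follows. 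So the missing ingredient is precisely the reduction to compact convex $M$ via Lemma 2.4 together with the correct reference \cite{KRTER} (which is the dual, Fourier-side form of the synthesis statement you want); your Leibniz computation on exponential monomials is correct but is only the easy half of that theorem. If you want to salvage your primary (function-side) route, you would still have to prove synthesis for $\ker T_\mu\subset A(Q+K)$ from the compact-set results, which is essentially the same localization performed on the predual side --- at which point the paper's Fourier-side argument is strictly shorter.
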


\begin{proof}
Без ограничения общности $0\in Q+K$.
По теореме об открытом отображении \cite[теорема 6.7.1]{Edwards}
$T_\mu : A(Q + K)\to A(Q)$ открыто.
Поэтому подпространство ${\rm Im} \,M_{\widehat \mu} = \widehat\mu \cdot \mathcal A_Q$
замкнуто в $\mathcal A_{Q + K}\simeq A(Q + K)_b'$.
Кроме того, подпространство
$(\widehat\mu \cdot A(\CC)) \cap \mathcal A_{Q + K}$ также замкнуто в $\mathcal A_{Q + K}$.
Пусть $CC_0(Q + K)$ -- семейство всех выпуклых компактных подмножеств
$Q + K$, содержащих 0. По лемме 2.4
$\mathcal A_{Q + K} = {\rm ind}_{M\in CC_o(Q + K)} \mathcal A_M$.
По \cite[теорема 4.4]{KRTER} $\widehat{\mu}\cdot\CC [z]$
плотно в $(\widehat\mu\cdot A(\CC)) \cap \mathcal A_M$ для любого
$M\in CC_0(Q + K)$ ($\CC [z]$ -- множество всех многочленов
над полем $\CC$).
Значит, $\widehat \mu \cdot \mathcal A_Q \supset \widehat\mu \cdot \CC[z]$
плотно в $(\widehat \mu \cdot A(\CC))\cap \mathcal A_{Q + K}$.
Таким образом, $\widehat \mu \cdot \mathcal A_Q = (\widehat \mu \cdot A(\CC)) \cap
\mathcal A_{Q + K}$.
\end{proof}

Ниже будем использовать следующие
множества опорных направлений, соответствующих опорным точкам из $\gamma\subset \partial Q$:
$$
S_\gamma:=\{a\in S\,|\,{\rm Re}(wa)=H_Q(a) \mbox{ для некоторого }
w\in\gamma\}.
$$


Докажем некоторые геометрические свойства $Q$.

\begin{remark}	
	
\noindent
$(i)$ Если $\gamma\subset\partial Q$ компактно, множество
$S_\gamma$ тоже компактно. В частности, $S_\omega$ компактно.
	
\noindent
$(ii)$ Для любой (относительной) окрестности $\gamma$ множества $\omega$ в $\partial Q$
	множество $S_\gamma$ является (относительной) окрестностью $S_\omega$ в $S$.
	
	
\noindent	
$(iii)$ Для любого компакта $\sigma\subset S\backslash S_\omega$,
любого $T\in CC(Q)$ существует
	выпуклый компакт $T_0$ в ${\rm int}\,Q$ такой, что
$H_T\le H_{T_0}$ на $\sigma$.

\noindent
$(iv)$ Для любого компакта $\sigma\subset S\backslash S_\omega$
существует $s\in\mathbb N$, для которого $H_\omega < H_{G_s}$ на $\sigma$.

\end{remark}
\begin{proof}
Утверждение $(i)$ очевидно.

$(ii)$: Предположим противное. Тогда найдутся последовательности $a_n\in S$
и $z_n \in \partial Q$, $n\in \NN$, такие, что
${\rm dist}(a_n, S_{\omega}) \to 0$ и ${\rm Re} (z_n a_n) > H_{\gamma} (a_n)$.
Без ограничения общности $a_n \to a$, где $a\in S_{\omega}$.
Возьмем точку $z_0 \in \omega$, для которой ${\rm Re} (z_0 a) = H_Q (a) = H_{\omega} (a)$.
Тогда $H_Q(a)=H_\gamma(a)$.
Заметим, что $z_n \notin \gamma $, $n\in\NN$, а значит, найдется $\delta > 0$
такое, что ${\rm dist}(z_n, \omega) \geq \delta$ для любого $n\in \NN$.
Следовательно, существует ограниченная последовательность точек
$\widetilde z_n \in \overline{Q},\, n\in \NN$, точек, расположенных на отрезках $[z_n, z_0]$
и вне множества $\omega + B(0, \delta)$.
Пусть $\widetilde z_n = (1-{\beta}_n) z_n + \beta_n z_0$, $\beta_n \in [0, 1]$.
Без ограничения общности $\beta_n \to \beta \in [0,1]$ и
$\widetilde z_n \to  v \in\overline Q$, причем $v\notin Q$.
Так как
${\rm Re} (\widetilde z_n a_n) = (1 - \beta_n) {\rm Re (z_n a_n)} + \beta_n {Re (z_0 a_n)} \ge
(1 - \beta_n) H_\gamma (a_n) + \beta_n {\rm Re} (z_0 a_n)$,
то, переходя к пределу при $n\to\infty$, получим:
${\rm Re} (va) \ge (1 - \beta) H_{\gamma} (a) + \beta H_Q (a) =
(1 - \beta) H_Q  (a) + \beta H_Q (a) = H_Q (a).$
Отсюда следует, что $v$ лежит на опорной прямой
$\{ z\in \CC \,|\, {\rm Re} (za) = H_Q (a)\}$ к $\overline Q$,
содержащей также точку $z_0\in\omega$.
Получено противоречие с леммой 2.1 (iii).

$(iii)$: Без ограничения общности $0\in{\rm int}\,Q$.
Возьмем $R>0$ такое большое, что $T$ содержится
в круге $B(0,R):=\{z\in\mathbb C\,|\, |z|<R\}$. Положим $Q_R:=Q\cap B(0,R)$.
Тогда $H_T< H_{Q_R}$ на $\sigma$. Действительно, возьмем $a\in\sigma$
и точку $z_0\in T$ такую, что $H_T(a)={\rm Re}(z_0 a)$.
Опорная к $T$ прямая $p=\{z\in\CC\,|\, {\rm Re}(z a)=H_T(a)\}$ пересекает ${\rm int}\,Q$
(иначе $a\in S_\omega$). Следовательно, прямая
$p$ пересекает и ${\rm int}\,Q_{Q_R}$, а значит,
$H_T(a)={\rm Re}(z_0 a)< H_{Q_R}(a)$.
Поскольку $0\in {\rm int}\,Q_R$, то существует $\alpha\in(0,1)$
такое, что $H_T\le\alpha H_{Q_R}$. Следовательно, в качестве компакта $T_0$
можно взять $T_0:=\alpha\overline{Q_R}$.

Утверждение (iv) следует из (iii), поскольку
${\rm conv}\,\omega$ -- выпуклый компакт в $Q$.
\end{proof}


\begin{remark} Пусть $Q$ неограниченно. Далее нам понадобится
	разделение $Q$ на две части: одна из них ограниченна и содержит
	$\omega$, а другая неограниченна. Для этого введем следующую терминологию.
	Вещественную прямую $l$ в $\CC$ назовем разделяющей $Q$, если
	$\omega$ содержится в одной открытой полуплоскости $\Pi(l)$
	с границей $l$ и пересечение этой полуплоскости с $Q$
	ограниченно. 
Такие прямые существуют.
	Возьмем точку $z_0\in\omega$.
	Существует $a\in S$ такое, что $H_Q(a)={\rm Re}(z_0 a)$ и
	$$
	Q\subset\{z\in\CC \,|\, {\rm Re}(z a)\le H_Q(a)\}.
	$$
	Так как $\omega$ компактно, то найдется $\gamma\in\RR$,
	для которого множество
	$$
	\Omega:=\{z\in Q \,|\, \gamma<{\rm Re}(z a)\le H_Q(a)\}.
	$$
	содержит $\omega$ и $0$.
	Выпуклое множество $\Omega$ ограниченно. Действительно,
	предположим, что найдутся $z_n\in\Omega$, $n\in\NN$, такие, что
	$|z_n|\to\infty$ и ${\rm dist}(z_n,\omega)\ge 1$. Пусть точки
	$w_n\in[z_0,z_n]$ выбраны так, что ${\rm dist}(w_n,\omega)=1$,
	$n\in\NN$. Тогда
	$w_n\in Q$.
	Существует подпоследовательность $(w_{n_k})_{k\in\NN}$,
	сходящаяся к некоторой точке $w_0$.
	Если $\alpha_n$ -- наименьший угол между опорной (к
	$\overline Q$) прямой $l_0:=\{z\in\CC\,|\,{\rm
		Re}(z a)=H_Q(a)\}$ и отрезком $[z_0,z_n]$, то $\alpha_n\to 0$.
	Поэтому $w_0\in l_0$ и ${\rm
		dist}(w_0,\omega)\ge 1$. Кроме того, $w_0\in\overline Q$.
	Следовательно, $w_0\in (\partial
	Q)\backslash\omega$. Этого не может быть вследствие леммы 2.1~(iii).
	
	Для всякой разделяющей $Q$ прямой $l$ положим
	$Q(l):=Q\cap\Pi(l)$.
	\end{remark}


Далее мы будем использовать субгармонические мажоранты, применяемые
в теоремах деления. Конструкции подобного рода стали широко использоваться после работ
Л.~Эренпрайса (см., например, \cite{EHRDIV},
\cite{EHREN}). При этом будут учитываться
модификации, осуществленные З.~Моммом \cite{DIVISION}.

Для ограниченного множества $M\subset\CC$, $b\in\mathbb C$, $r>0$ через $h(M,b,r)(z)$
обозначим наибольшую субгармоническую функцию,
равную  опорной функции $H_M(z)$ множества $M$ на $|z - b|\geqslant r$. Функция
$h(M,b,r)(z)$ гармоническая в круге $|z-b|<r$ и непрерывная в $|z-b|\le r$.
\begin{remark} (I) Пусть $M$ -- ограниченное подмножество $\CC$.

\noindent
(a) Опорная функция
$H_M$ удовлетворяет условию Липшица. Именно, для любых $z, w\in\CC$
\begin{equation}
|H_M(z)-H_M(w)|\le C|z-w|,
\end{equation}
где $C:=\max\limits_{|t|\le 1}|H_M(t)|<+\infty$.

\medskip
Отметим ниже некоторые простые свойства функций $h(M,b,r)$.

\noindent
(b) Функция $h(M,b,r)(b)$ непрерывна по $b\in\CC$.

\noindent
(c) Для $b\in\CC$, $r>0$ положим
$$
\Delta (M,b,r) := \max\limits_{|z - b|
\leqslant r} (h(M,b,r)(z) - H_M (z));
$$
$\Delta (M,b,0) :=0$.
\medskip
Функция $\Delta (M,b,r) $ обладает следующими свойствами:
\begin{itemize}
\item[(i)] $\Delta (M,b,r)$ 
положительно однородная (степени 1)
относительно $(b,r)$, т.~е.
$\Delta (M,\alpha b,\alpha r)=\alpha \Delta (M,b,r)$
для любых $b\in\CC$, $r\ge 0$, $\alpha\ge 0$.
\item[(ii)] Семейство функций $f_b(r):=
\Delta(M,b,r)$, $b\in S$, равностепенно непрерывно
в точке $r=0$.

\end{itemize}

\noindent (II) Если $Q$ ограниченно, то в точке $r=0$ равностепенно непрерывно
семейство функций $f_{b,M}(r):=
\Delta(M,b,r)$, $b\in S$, $M\in CC(Q)$.
\begin{proof}
Утверждение (b) следует из теоремы о среднем для гармонических функций и
неравенства (3.1).

Свойство (c)\,(i) вытекает из положительной однородности
    (степени 1) функции $H_M$, а свойство (c)\,(ii) следует из неравенства (3.1).

Утверждение (II)следует из неравенства (3.1) и того, что для любого
$M\in CC(Q)$ выполняется неравенство $\max\limits_{|t|\le 1}|H_M(t)|\le
\max\limits_{|t|\le 1}|H_Q(t)|<+\infty$.
\end{proof}

\end{remark}

\medskip
Для множества $M\subset\CC$
символом $E_M$ обозначим множество всех
$b\in S$ таких, что
$H_M(b) < +\infty$, а $N_M$ -- множество всех $b\in S$,
не имеющих открытой окрестности, в которой функция
$H_M$ гармонична. Положим
$$
R_M:=E_M\cap N_M.
$$
Отметим, что $N_M$ замкнуто.

\medskip
\begin{lemma}\label{CC0Lemma}
Пусть множество $Q$ ограниченно.
Тогда

\noindent
(i) $\inf\limits_{b\in R_Q} (h(Q,b,r)(b) - H_Q(b)) > 0$
для любого $r>0$.

\noindent
(ii) Для любого $r>0$ найдется компакт $M\in CC(Q)$ такой, что
$$
\inf\limits_{b\in R_Q}(h(M,b,r)(b) - H_Q(b)) > 0.
$$

\end{lemma}

\begin{proof}
(i): Предположим, что для некоторого $r>0$
$$
\inf\limits_{b\in R_Q} (h(Q,b,r)(b) - H_Q(b))=0.
$$
Тогда найдется последовательность $b_j\in R_Q$, $j\in\NN$,
для которой
$$
h(Q,b_j,r)(b_j) - H_Q(b_j)\to 0.
$$ 
Без ограничения общности
существут $b\in S$ такое, что $b_j\to b$. При этом $b\in R_Q$.
Вследствие непрерывности функции $H_Q$ и
функции $h(Q,a,r)(a)$ по $a\in S$ (см. замечание 4)
выполняется равенство $h(Q,b,r)(b)=H_Q(b)$. По принципу максимума для
субгармонических функций $H_Q=h(Q,b,r)$ в круге
$|z-b|\le r$. Это противоречит тому, что $b\in R_Q$.

(ii): Зафиксируем $r>0$. По (i)
$$
\alpha:=\inf\limits_{b\in R_Q} (h(Q,b,r)(b) - H_Q(b)) > 0.
$$
Без ограничения общности можно считать, что $0\in {\rm int}\,Q$.
Выберем $\varepsilon\in (0,1)$ такое, что
$\varepsilon<\alpha/(2\max\limits_{|z|\le r+1}|H_Q(z)|)$ и возьмем
$M:=(1-\varepsilon)\overline Q$. Тогда $M\in CC(Q)$ и
$$
h(Q,b,r)(b)-h(M,b,r)(b)\le \max\limits_{|z|\le r+1}|H_Q(z)-H_M(z)|<\frac{\alpha}{2}.
$$
Поэтому для любого $b\in R_Q$
$$
h(M,b,r)(b)-H_Q(b)\ge h(Q,b,r)(b)-\alpha/2-H_Q(b)\ge\alpha/2.
$$

\end{proof}


Приведем определение, идущее от Л.~Эренпрайса \cite{EHRDIV}
и К.А.~Беренстейна, Б.А.~Тейлора \cite{BERTAY}.

\begin{definition}
Пусть $\mu\in A(K)'$, $A$ -- замкнутое подмножество $S$.
Функция $\widehat\mu$ называется {\it медленно убывающей} на $\Gamma(A)$,
если
$\forall k\in \mathbb{N}\,
\, \exists R > 0:
\forall z \in \Gamma(A), \,
|z| \geqslant R, \,
\exists w\in\mathbb{C},$
$|w - z|\leqslant |z|/k:\,
|\widehat \mu (w)| \geqslant \exp(H_K(w) - |w|/k).
$
\end{definition}

Отметим, что медленное убывание $\widehat\mu$ на $ \Gamma (A)$ равносильно тому, что
индикатор $h_{\widehat{\mu}}(\theta)$ целой функции $\widehat\mu$ экспоненциального типа
равен $H_K(e^{i\theta})$, $\theta\in\mathbb R$, и $\widehat\mu$ -- функция вполне регулярного роста на
$\Gamma (A)$ в смысле Левина-Пфлюгера \cite{LEVIN}.

Далее $V(\widehat{\mu})$ -- множество всех
нулей функции $\widehat\mu$. Если  $V(\widehat{\mu})$ бесконечно, то
символ $A_{\widehat \mu}$ обозначает множество всех предельных точек
последовательности
$\left(z/ |z| \,| \, z\in V(\widehat{\mu})\right)$.
Для множества $B\subset\mathbb C$ введем конус, порожденный $B$:\,
$\Gamma(B):=\{\lambda b\,|\, \lambda \geqslant 0,
b\in B \}$.

Согласно \cite[лемма 5; см. также
утверждение 4 там же]{MOMM94}
медленное убывание $\widehat\mu$ на $\Gamma(A_{\widehat\mu})$
гарантирует "хорошие" \, оценки снизу для $|\widehat\mu|$ на границе некоторой
окрестности нулевого множества $\widehat\mu$, состоящей из
"небольших" \, компонент. Именно, справедлива

\medskip
\begin{lemma}\label{muLemma2} \cite{MOMM94}
Пусть множество всех нулей $\widehat\mu$ бесконечно и $\widehat{\mu}$ медленно убывает на $\Gamma(A_{\widehat{\mu}})$.
Существует последовательность попарно непересекающихся ограниченных областей
$(\Omega_j)_{j\in\NN}$ таких, что любого $n\in\NN$  найдется постоянная $C>0$,
для которой для любого $j\in\NN$ выполняется следующее:

\begin{itemize}
\item[(i)]
$|\widehat \mu (z)| \geqslant {\rm exp}
 (H_K(z) - |z|/n - C), \,\, z\in\partial \Omega_j$.
\item[(ii)]
$\sup\limits_{z,w\in \Omega_j}|z-w|\le \frac{1}{n} \inf\limits_{z\in \Omega_j}|z|+C$.
\item[(iii)]
$\sup\limits_{z\in \Omega_j}H_K(z)\le \inf\limits_{z\in \Omega_j}(H_K(z)+|z|/n)+C$.
\end{itemize}

Кроме того,
\begin{itemize}
\item[(iv)] $V(\widehat\mu)\subseteq \bigcup\limits_{j\in\NN} \Omega_j$ и
$\Omega_j \cap V(\widehat\mu) \neq \varnothing$
для любого $j\in \NN$.
\end{itemize}
\end{lemma}

\begin{lemma} \label{muLemma3}
Пусть оператор
$T_{\mu}: A(Q + K) \to A(Q)$ сюрьективен.
Тогда для любого компакта $A$ в $R_Q$ функция
$\widehat\mu$ медленно убывает на $\Gamma(A)$.
\end{lemma}

\begin{proof} Далее пространства $A(Q)_b'$ и $A(Q+K)_b'$
отождествляются, по лемме 2.3, с $A_Q$ и $A_{Q+K}$, соответственно.
Поскольку оператор $T_{\mu} : A(Q + K) \to A(Q)$ сюрьективен, то
по теореме об открытом отображении \cite[теорема 6.7.2]{Edwards}
$T_\mu$ открыто. По \cite[теорема 8.6.8]{Edwards} для любого
равностепенно непрерывного множества $B\subset A_{Q + K}$ его прообраз
$M_{\hat\mu}^{-1} (B)$ -- равностепенно непрерывное подмножество $A_Q$.
Заметим, что для любого $M\in CC(Q)$, вследствие рефлексивности
пространства $A(M)$,
наборы ограниченных и равностепенно непрерывных подмножеств
$A_M$ (если $A(M)_b'$ отождествить с $A_M$) одни и те же.
Поэтому, учитывая замечание 2\,(iv), получим, что справедливо следующее
утверждение (BD):

{\it Пусть $(f_j)_{j\in\NN}$ -- последовательность в $A_Q$ такая, что
существует $M\in CC(Q)$, для которого для любого $s\in\NN$
$$
\sup\limits_{j\in\NN}
\sup\limits_{z\in\CC}
\frac{|\widehat\mu (z)||f_j (z)|}{{\rm exp}(H_M(z) + H_K (z) + |z| /s)} < +\infty.
$$
Тогда найдется компакт $T\in CC(Q)$ такой, что
для любого $s\in\NN$
$$
\sup\limits_{j\in\NN}
\sup\limits_{z\in\CC}
\frac{|f_j(z)|}{{\rm exp}(H_T (z)  + |z| /s)} < +\infty.
$$
}

(I) Пусть $Q$ ограниченно. Тогда $R_Q$ компактно. Докажем
утверждение для $A:= R_Q$.
Предположим, что 
$\hat{\mu}$ не является медленно убывающей на $\Gamma(R_Q)$.
Тогда найдутся $k\in\NN$, последовательность $z_j \in \Gamma (R_Q),\, j\in\NN$,
для которых $|z_j| \to\infty$, $|z_j| > k$ и для любого $j\in\NN$
\begin{equation}
|\hat\mu (w) | < {\rm exp} (H_K(w) - |w|/k), \,\,\mbox{ если } \, |w - z_j| \leqslant
\frac{|z_j|}{k}.
\end{equation}

В силу замечания 6~(II) найдется $k_1\geqslant 2k$ такое, что для любого компакта
$M\in CC(Q)$
\begin{equation}\label{delta}
\Delta \left(M, z_j, \frac{|z_j|}{k_1}\right) =
|z_j| \Delta \left(M, a_j, \frac{1}{k_1}\right) \le \frac{|z_j|}{2 k}, \,
j\in\NN,
\end{equation}
где $a_j := z_j / |z_j|$.

По лемме 3.2\,(ii) существуют компакт
$M\in CC(Q)$ и $k_2\in\NN$, для которых
\begin{equation}
h\left(M, a_j, \frac{1}{k_1}\right)(a_j) - H_Q (a_j) \ge \frac{1}{k_2},\,\,j\in\NN.
\end{equation}

Положим $h_j (z) := h(M, z_j, |z_j|/{k_1})(z)$.
По \cite[теорема 4.4.4]{HERM} найдется $C_1 > 0$ такое, что
для любого $j\in\NN$ существует функция $f_j\in A(\CC)$, для которой
$f_j (z_j) := {\rm exp}(h_j (z_j))$ и
$$
|f_j (z)|\le C_1 {\rm exp} (\widetilde h_j (z) + C_1{\rm  log}(1 + |z|)),\, z\in\CC.
$$
Здесь $\widetilde h_j (z) := \sup\limits_{|t - z|\le 1} h_j (t), \, z\in\CC$,
\, $j\in\NN$.
Отсюда следует, что найдется постоянная $C_2$ такая, что для любых $z\in\CC$, $j\in\mathbb N$
\begin{equation}\label{fj}
|f_j(z)|\le C_2{\rm exp} \left(H_M (z) + C_1{\rm log}(1 + |z|) + \Delta \left(M, z_j,
\,\frac{|z_j|}{k_1}\right)\right).
\end{equation}
Поэтому, в частности, все функции $f_j$ принадлежат $A_Q$.

В силу (3.2), (3.3), (3.5) и того, что $k_1 \ge 2k$, для $j\in\NN$
$$
|\widehat\mu (z) f_j (z)| \le C_2 {\rm exp} (H_M (z) + C_1
{\rm log} (1 + |z|) + H_K (z)),
\,\mbox{ если } \, |z - z_j| \le \frac{|z_j|}{k_1}.
$$
Поскольку $\mu\in A(K)'$, то \cite[теорема 4.5.3]{HERM}
для любого $\varepsilon>0$
$$
\sup\limits_{z\in\CC}\frac{|\widehat\mu(z)|}{\exp(H_K(z)+\varepsilon|z|)}<+\infty.
$$
Таким образом, для любого $s\in\NN$
$$
\sup\limits_{j\in\NN} \sup\limits_{z\in\CC}
\frac{|\widehat\mu (z) f_j (z)|}{{\rm exp} (H_M (z) + H_K (z) + |z|/s)} < +\infty.
$$
При этом для любого $j\in\NN$, вследствие (3.4),
$$
f_j (z_j) = {\rm exp} \left(|z_j| h\left(M, a_j,
\frac{1}{k_1}\right)(a_j)\right)\ge {\rm exp} \left(H_Q (z_j) +
\frac{|z_j|}{k_2}\right),
$$
а значит, для $s_0:= 2k_2$
$$
\sup\limits_{z\in\CC} \frac{|f_j (z)|}{{\rm exp} (H_Q (z) +
|z| / s_0)} \ge {\rm exp} \left(\frac{|z_j|}{2k_2}\right), \,\,
j\in\mathbb N.
$$
Отсюда следует, что для любого $T\in CC(Q)$
$$
\sup\limits_{z\in\CC} \frac{|f_j (z)|}{{\rm exp} (H_T (z) +
|z| / s_0)} \ge {\rm exp} \left(\frac{|z_j|}{2k_2}\right), \,\,
j\in\mathbb N.
$$
Это противоречит утверждению (BD). Таким образом, $\widehat\mu$
медленно убывает на $\Gamma(R_Q)$.

(II) Пусть множество $Q$ неограниченно и $A$ -- компакт
в $R_Q$. Без ограничения общности $0\in{\rm int}\,Q$ и $0\in K$.
Зафиксируем разделяющую $Q$ прямую $l$ (см. замечание 5).
Из леммы 2.1 вытекает, что $Q(l)$ имеет счетный базис
окрестностей, состоящий из (ограниченных) выпуклых областей. Вследствие
замечания 4~(ii)
существует компактная окрестность $V$ множества $S_\omega$ в
$S$ такая, что $H_{Q(l)}=H_Q$ на $V$.
Если $\widehat{\mu}$ не является медленно убывающей на
$\Gamma(V\cap A)$, то по части (I) существуют компакт $M\in CC(Q(l))
\subset CC(Q)$,
точки $z_j\in \Gamma(V\cap A)$, $j\in\NN$, последовательность
функций $(f_j)_{j\in\mathbb N}$ из $H_{Q(l)}$, для которых
для любого $s\in\NN$
$$
\sup\limits_{j\in\NN}\sup\limits_{z\in\CC}
\frac{|\widehat{\mu}(z)||f_j(z)|}{\exp(H_M(z)+H_K(z)+|z|/s)} < +\infty,
$$
но найдется $s_0\in\NN$ такое, что
$$
\sup\limits_{j\in\NN}
\frac{|f_j(z_j)|}{\exp(H_{Q(l)}(z_j)+|z_j|/s_0)} =  +\infty.
$$
Поскольку $H_{Q(l)}(z_j)=H_Q(z_j)$, \, $j\in\NN$,
то для любого $T\in CC(Q)$
$$
\sup\limits_{j\in\NN}\sup\limits_{z\in\CC}
\frac{|f_j(z)|}{\exp(H_T(z)+|z|/s_0)} = +\infty,
$$
Значит, утверждение (BD) не выполняется.

Пусть $B:=S\backslash({\rm int}_r V)$,
где ${\rm int}_r V$ -- относительная внутренность $V$ в $S$.
Предположим, что $\widehat{\mu}$ не является медленно
убывающей на $A\cap B$. Тогда, согласно \cite[теорема 1]{EPIF74},
\cite[теорема 1; раздел 3, доказательство теоремы 1]{TKACH77},
найдутся
выпуклый компакт $M_0$ в ${\rm int}\,Q$, содержащий $0$,
целая функция
$f$, последовательность $z_j\in\Gamma(A\cap B)$, $j\in\NN$,
такие, что для любого $s\in\NN$
$$
\sup\limits_{z\in\CC}
\frac{|\widehat{\mu}(z)||f(z)|}{\exp(H_{M_0}(z)+H_K(z)+|z|/s)} < +\infty,
$$
а для любого $T_0\in CC({\rm int}\,Q)$ существует $\widetilde s\in\NN$,
для которого
$$
\sup\limits_{j\in\NN}
\frac{|f(z_j)|}{\exp(H_{T_0}(z_j)+|z_j|/{\widetilde s})} = +\infty.
$$
При этом $z_j$ можно выбрать так, что $\widehat{\mu}(z_j)\ne 0$
для любого $j\in\NN$.
По
\cite[теорема 4.4]{KRTER}
 найдется последовательность
многочленов $(p_n)_{n\in\NN}$ такая, что последовательность
$(\widehat{\mu}p_n)_{n\in\NN}$
сходится к $\widehat{\mu}f$ в $A_{M_0+K}$ и, как следствие,
в каждой точке $z_j$. Отсюда и из замечания 4~(iii) следует, что
для некоторой подпоследовательности
$(p_{n_j})_{j\in\NN}$ не выполняется условие (BD).
Таким образом, $\widehat\mu$ медленно убывает на $\Gamma(A\cap B)$
и на $\Gamma(A)\subset\Gamma(A\cap B)\cup\Gamma(A\cap V)$.

\end{proof}



\subsection{Вспомогательные пространства векторнозначных
последовательностей}

Пусть множество нулей $\widehat\mu$ бесконечно и некоторая
последовательность попарно
непересекающихся ограниченных областей $(\Omega_j)_{j\in\NN}$
удовлетворяет условиям (i) -- (iv) леммы \ref{muLemma2}.
Следующая конструкция идет от Р.~Майзе \cite{MEISE85}. Через $A^{\infty}
(\Omega_j)$ обозначим банахово пространство всех ограниченных
аналитических в $\Omega_j$ функций; $I_j := {\widehat\mu}\,|_{\Omega_j}\cdot
A^{\infty}(\Omega_j)$ -- замкнутый идеал в $A^{\infty} (\Omega_j)$,
порожденный функцией $\widehat\mu |_{\Omega_j}$. Положим $E_j :=
A^{\infty}(\Omega_j)\big/ I_j$, $j\in\NN$; в $E_j$ вводится фактор-норма
$$
\|f + I_j\|_j := \inf\limits_{\xi \in f + I_j} \sup\limits_{z\in \Omega_j} |\xi (z)|.
$$
Размерность конечномерного пространства $E_j$ равна числу нулей функции $\widehat\mu$,
содерждащихся в $\Omega_j$, с учетом их кратностей.

Введем далее весовые пространства последовательностей с элементами из $E_j$.
Для любого $j\in\NN$ выберем $\nu_j\in \Omega_j\cap V(\widehat\mu)$. Без ограничения общности
можно считать, что $|\nu_j|\le|\nu_{j+1}|$ для любого $j\in\NN$. Так как $\widehat\mu$ --
целая функция экспоненциального типа, то $\lim\limits_{j\to\infty}\frac{{\rm log}\,j}{|\nu_j|}=0$.
Положим для $n,m\in\NN$
$$
\Lambda_m(Q_n,\widehat\mu,\mathbb E)
:=
\left\{X=(x_j)_{j\in\NN}\in\prod\limits_{j\in\mathbb N}E_j\, |\,\,
\,\||X\||_{nm}
:=\sup\limits_{j\in\NN}\frac{\|x_j\|_j}{\exp(H_{nm}(\nu_{j}))}
<+\infty\right\};
$$
$\Lambda_m(Q_n,\hat\mu,\mathbb E)$ -- банахово пространство с
нормой $\||X\||_{nm}$.
Пусть
$$
\Lambda(Q_n,\hat\mu,\mathbb E):={\rm ind}_{m\to}\Lambda_m(Q_n,\hat\mu,\mathbb E),\,\,
\Lambda(Q,\hat\mu,\mathbb E):={\rm proj}_{\gets n}\Lambda(Q_n,\hat\mu,\mathbb E).
$$
Отметим, что пространства $\Lambda(Q,\hat\mu,\mathbb E)$ от выбора точек
$\nu_j\in\Omega_j\cap V(\widehat\mu)$, $j\in\mathbb N$, не зависят.


\medskip
Доказательства следующих ниже лемм 3.5 и 3.6 (для произвольного выпуклого компакта
$K\subset\CC$), по сути, такие же, как и доказательства
лемм 3.1 и 3.3 в \cite{BM} для $K = \{0\}$. Доказательство леммы 3.5 сводится к проверке
некоторого условия (P), равносильного ультраборнологичности пространства числовых последовательностей
$\Lambda(B)$, задаваемого числовой матрицей $B$ (при этом используется
\cite[теорема 4.2]{VOGT}). Эта матрица $B$ получается из той, которая задает
$\Lambda (Q, \hat\mu, \mathbb E)$, повторением столбцов последней (число повторений равно
размерности соответствующих пространств $E_j$). Топологическая изоморфность
$\Lambda (Q, \widehat\mu, \mathbb E)$ и $\Lambda(B)$ вытекает (по доказательству
\cite[предложение 1.4]{MEISE85}) из того, что
$\lim\limits_{j\to\infty}\frac{{\rm log}\,j}{|\nu_j|}=0$, и "малости" \, диаметров компонент
$\Omega_j$ по сравнению с их расстоянием до начала координат, т.~е. из условия (iii) леммы 3.3.

\begin{lemma}
Пусть множество нулей функции $\widehat\mu$ бесконечно
и существует
последовательность областей $(\Omega_j)_{j\in\NN}$, как в лемме 3.3.
Пространство $\Lambda (Q, \widehat\mu, \mathbb E)$
ультраборнологично.
\end{lemma}


\begin{lemma}
Пусть множество нулей функции $\widehat\mu$ бесконечно;
оператор $T_{\mu}: A(Q + K) \to A(Q)$ сюрьективен и существует
последовательность областей $(\Omega_j)_{j\in\NN}$, как в лемме 3.3.
Тогда отображение
$$
\rho : A_{Q + K}\left/\right.(\widehat\mu \cdot A_Q) \to \Lambda
(Q, \widehat\mu, \mathbb E), \,\, f + \widehat\mu \cdot A_Q
\mapsto \left(f\left|\right._{S_j}+I_j\right)_{j\in\mathbb N}
$$
является линейным топологическим изоморфизмом "на".
\end{lemma}

\begin{proof} 
Как установлено в
\cite[доказательство предложения 6]{MOMM94}, для любого $n\in\NN$ точна короткая последовательность
$$
0\longrightarrow \mathcal A_{Q_n}\stackrel{M_{\widehat\mu}}{\longrightarrow}
\mathcal A_{Q_n+K}\stackrel{\rho_{0,n}}{\longrightarrow}
\Lambda(Q_n+K,\widehat\mu,\mathbb E)\longrightarrow 0,
$$
для $\rho_{0,n}(f):=(f|_{\Omega_j}+E_j)_{j\in\NN}$.
Поскольку ${\rm Proj}^1\mathcal A_{Q,{\rm sp}}=0$ (замечание 2), то
 \cite[теорема 5.1]{VOGT}, отображение
$\rho_0: \mathcal A_{Q+K}\to \Lambda(Q+K,\widehat\mu,\mathbb E)$,
$\rho_{0,n}(f):=(f|_{\Omega_j}+E_j)_{j\in\NN}$, сюръективно.
Равенство $\widehat{\mu}\cdot  \mathcal A_Q =(\widehat{\mu}\cdot A(\CC))\cap \mathcal A_{Q+K}$
(лемма 3.1) влечет, что ядро оператора
$\rho_0: \mathcal A_{Q+K}\to \Lambda(Q+K,\widehat\mu,\mathbb E)$
совпадает с $\widehat\mu \cdot \mathcal A_Q$.
Поскольку пространство $\mathcal A_Q$, как счетный проективный предел (LB)-пространств,
имеет сеть \cite[лемма 24.28]{MEIVOGT}, а $\Lambda (Q, \widehat\mu,\mathbb E)$
ультраборнологично (лемма 3.5), то по теореме об открытом оторажении \cite[теорема 24.30]{MEIVOGT}
$\rho : \mathcal A_{Q + K}\bigm / (\widehat\mu \cdot \mathcal A_Q) \to \Lambda (Q, \widehat\mu,\mathbb E)$ --
топологический изоморфизм "на".
\end{proof}

Полученная в лемме 3.6 реализация факторпространства
$A_{Q+K}\!\!\bigm/\!\!(\hat{\mu}\cdot \mathcal A_Q)$ в виде пространства
векторнозначных последовательностей является важным промежуточным звеном между
общими и выражаемыми посредством субгармонических функций
условиями наличия ЛНПО к $T_\mu$.


\subsection{Критерий наличия ЛНПО к оператору свертки в терминах
существования специальных семейств субгармонических функций}

Введем следующее условие существования специальных
семейств субгармонических функций.

\begin{definition}
Пусть $A\subset S=\{z\in\mathbb C\,|\,|z|=1\}$.
Будем говорить, что выполняется условие $SH(A,Q)$,
если существует семейство
субгармонических
в $\CC$ функций $u_t$, $t\in\Gamma(A)$, таких, что
$u_t(t)\geqslant 0$, $t\in\Gamma(A)$, и
$\forall n \,\exists n'\, \forall m \,\exists m':$
$$
u_t(z)\leqslant H_{nm'} (z) - H_{n'm} (t), \,\, z\in\CC, \, t\in\Gamma(A).
$$
\end{definition}

\medskip
Для ограниченного множества $Q$
условие $SH(A,Q)$ имеет более простой вид (см. \S~4, замечание 7).

Вследствие \cite[доказательство замечания 4.3]{BM}
справедлива

\begin{lemma}\label{SHLemma2}
Пусть для множества $A\subset S$ выполняется условие $SH(A, Q)$.
Тогда $A\subset R_Q$.
\end{lemma}

\begin{theorem} Пусть множество нулей
$\widehat\mu$ бесконечно и
оператор $T_{\mu} : A(Q + K)\to A(Q)$ сюрьективен.
Следующие утверждения равносильны:

\noindent
(i) $T_{\mu} : A(Q + K)\to A(Q)$ имеет ЛНПО.

\noindent
(ii) Выполняется условие $SH(A_{\widehat \mu}, Q)$.
\end{theorem}

\begin{proof}
$(i)\Rightarrow(ii)$: Используем идею представления оператора $T_{\mu}$
  в виде произведения (композиции) операторов свертки,
  идущую от Ю.Ф. Коробейника
  \cite{SMJ}.

  Существует целая функция $g$ типа 0 при порядке 1
  такая, что
  $h := \widehat\mu/g \in A(\CC)$ и
  $A_{\widehat\mu} = A_g$.
  Тогда $T_{\mu} = T_{{\cal F}^{-1} (g)}T_{{\cal F}^{-1}(h)}$.
  Так как $T_{\mu} : A(Q + K)\to A(Q)$ имеет ЛНПО, то оператор
  $T_{{\cal F}^{-1}(g)} : A(Q)\to A(Q)$ также имеет ЛНПО.
  Поэтому утверждение этой леммы вытекает из
  сответствующего утверждения для оператора $T_{{\cal F}^{-1} (g)}$
  \cite[теорема 4.2, замечание 4.3]{BM}.

\noindent
  $(ii)\Rightarrow (i)$: Пусть выполняется условие
  $SH(A_{\widehat \mu}, Q)$. По лемме 3.7
  $A_{\widehat\mu}\subset R_Q$. Вследствие леммы 3.4
функция $\hat\mu$ медленно убывает на
  $\Gamma (A_{\widehat \mu})$.
  Тогда существует последовательность областей
  $(\Omega_j)_{j\in\mathbb N}$, как в лемме 3.3. В силу леммы 3.6
   отображение
   $$
   \rho: A_{Q+K}\!\!\bigm/\!\!(\hat{\mu}\cdot A_Q) \to \Lambda(Q,\widehat{\mu},\mathbb E), \,\,
   f+\hat{\mu}\cdot A_Q \mapsto (f\!\!\bigm|_{S_j} + I_j)_{j\in\NN}
   $$
   является топологическим изоморфизмом "на"\,.
Поступая, по сути, так же, как при доказательстве импликации
$(ii)\Rightarrow(i)$ в \cite[теорема 4.2]{BM} получим, что
фактор-отображение $q: A_{Q + K} \to A_{Q + K} / (\hat\mu \cdot A_Q)$
имеет ЛНПО. (Переход от "исключительного" \, множества
кругов $(B_j)_{j\in\mathbb N}$ к "исключительному" \,
множеству областей $(\Omega_j)_{j\in\mathbb N}$
не приводит к затруднениям.)
По лемме 2.5 оператор $T_{\mu} : A(Q + K)\to A(Q)$
имеет ЛНПО.
\end{proof}

\medskip

\begin{flushleft}
С.~Н.~Мелихов

Южный федеральный университет, Ростов-на-Дону;
Южный математический институт Владикавказского научного центра РАН,
Владикавказ

E-mail: melih@math.rsu.ru

\end{flushleft}

\begin{flushleft}
Л.~В.~Ханина

Южный федеральный университет, Ростов-на-Дону

E-mail: khanina.lv@mail.ru

\end{flushleft}

\end{document}